\newtheorem{theorem}{Theorem}
\newtheorem{corollary}[theorem]{Corollary}
\newtheorem{lemma}[theorem]{Lemma}
\newtheorem{proposition}[theorem]{Proposition}
\theoremstyle{remark}
\newtheorem{remark}[theorem]{Remark}
\newcommand{\F}{\mathcal{F}}
\newcommand{\E}{\mathbb{E}}
\newcommand{\N}{\mathbb{N}}
\newcommand{\R}{\mathbb{R}}
\newcommand{\Z}{\mathbb{Z}}
\newcommand{\G}{\mathcal G}
\newcommand{\di}{\mathrm{d}}
\newcommand{\EE}{{\mathbb E}}
\newcommand{\1}{\mathbbm 1}
\newcommand{\e}{\varepsilon}
\renewcommand{\epsilon}{\varepsilon}
\renewcommand{\phi}{\varphi}
\renewcommand{\P}{\mathbb{P}}
\numberwithin{equation}{section}
\numberwithin{theorem}{section}
\renewcommand{\subset}{\subseteq}
\renewcommand{\N}{\mathbb{N}}
\newcommand{\Ex}{\mathbb{E}^x} 
\newcommand{\Ey}{\mathbb{E}_y} 
\newcommand{\Exy}{\mathbb{E}^x_y} 
\newcommand{\Elambda}{\mathbb{E}^{\lambda}} 
\newcommand{\Elambday}{\mathbb{E}^{\lambda}_y}
\title{Switching identities by probabilistic means}
\begin{document}
\author{J. Backhoff-Veraguas}
\address{Institute for Mathematics, University of Vienna}
\email{julio.backhoff@univie.ac.at}
\author{A.M.G. Cox} 
\address{Department of Mathematical Sciences, University of Bath, U.K.}
\email{a.m.g.cox@bath.ac.uk}
\author{A. Grass}
\address{Institute for Mathematics, University of Vienna}
\email{annemarie.grass@univie.ac.at}
\author{M. Huesmann}
\address{Institute for Mathematical Stochastics, University of M\"unster, Germany}
\email{martin.huesmann@uni-muenster.de}

\thanks{AG has been funded by FWF Projects P28661 and Y782}
\thanks{MH has been funded by the Deutsche Forschungsgemeinschaft (DFG, German Research Foundation) under Germany's Excellence Strategy EXC 2044 –390685587, Mathematics M\"unster: Dynamics-–Geometry-–Structure.}

\date{\today}
  
\maketitle

{\bf Abstract: }
Switching identities have a long history in potential
theory and stochastic analysis. 
In recent work of Cox and Wang, a switching identity was 
used to connect an optimal stopping problem and
the Skorokhod embedding problem (SEP). 
Typically switching identies of
this form are derived using deep analytic connections. 
In this paper, we prove the switching identities 
using a simple probabilistic argument,
which furthermore highlights a previously 
unexplored symmetry between the Root and Rost
solutions to the SEP.

\medskip   

{\bf Keywords:} Skorokhod embedding, Root, Rost, Optimal stopping, Switching identities.

\begin{description}
\item [{\bf MSC 2020}] 60G40,  	60G42,  	60H30.
\end{description}

%

\section{Introduction}
Let $D$ be a rectangle of horizontal length $T$ and let $(0,x),(0,y)$ be points on the left boundary of $D$. 
Let $B$ be Brownian motion (started in $x$ or $y$) and write $\sigma$ for the first time at which $(t,B_t)$ leaves the rectangle. 
As a particular case of 
Hunt's switching identities, we know that
\begin{align} \label{switch}
\E^{x} \left[|B_\sigma - y|\right] = \E^y \left[ |B_\sigma - x| \right].
\end{align}
In this paper, we will be interested in generalisations of this identity, which are of particular relevance in the construction of solutions to the Skorokhod embedding problem. 
The Skorokhod embedding problem can be stated as follows: Given measures $\lambda, \mu$ on $\R$, and a Brownian motion $B$ with $B_0 \sim \lambda$, find a stopping time $\tau$ such that 
\begin{equation}\label{SEP} B_\tau \sim \mu \text{ and } (B_{t \wedge \tau})_{t \ge 0} \text{ is uniformly integrable.} \tag{$\mathsf{SEP}$} \end{equation}

In this paper we will be interested in a sub-class of solutions to \eqref{SEP} which are given by the first hitting time of a right-barrier (resp.\ left-barrier) $\mathcal R$ which is a closed subset of  $\R_+\times \R$ with the property that $(t,x) \in \mathcal{R} \implies (s,x) \in \mathcal{R}$, when $s > t$ (resp. $s<t$).
The most prominent example of such a solution is the Root solution \cite{Ro69}. It establishes the existence of a right-barrier $\mathcal R^{Root}$ such that
\[
\tau^{Root}:=\inf\{t\geq 0 : (t,B_t)\in\mathcal R^{Root}\}
\]
solves \eqref{SEP}. 
There is an analogous result for left-barriers by Rost \cite{Ro76}. While Root's original contribution was non-constructive, it was shown in \cite{CoWa13, CoWa12} how to build the Root barrier. 
This was done  by considering \emph{optimal stopping problems} and establishing the crucial identity 
\begin{equation} \label{eq:main_intro}
  - \E^\lambda \left[ |B_{\tau^{Root}\wedge T} - y| \right] = \sup_{\sigma \le T} \E^y\left[U_{\mu}(B_\sigma) \1_{\sigma < T} + U_{\lambda}(B_\sigma) \1_{\sigma = T}\right] 
\end{equation}
where 
the supremum is taken over all stopping times $\sigma \le T$. 
Here $U_m$ is the potential function associated with the measure $m$, i.e.\ \[U_m(x) := -\int |y-x| \, m(\di y).\]

From formula \eqref{eq:main_intro}	,  the barrier region $\mathcal{R}^{Root}$ can be identified with the stopping region of the optimal stopping problem \emph{after a reversal of the time component}. Together with the switching of the starting position between $y$ and $\lambda$, this formula can be seen as a generalisation of \eqref{switch}. Furthermore, the articles \cite{CoWa13, CoWa12} also provide a construction for the Rost barrier similarly based on optimal stopping problems.

The equation \eqref{eq:main_intro} is well understood in potential theoretic terms, and can be seen as a deep consequence of Markov duality, applied to the time-space process $(t,B_t)$. However this connection does not offer much insight into why such equations might arise in these applications. In \cite{CoWa13, CoWa12} this connection was made via viscosity theory and it was noted that a probabilistic explanation has yet to be given. 
\medskip\\
{\it The  aim of this short paper is to show that \eqref{eq:main_intro} is a natural consequence of some simple probabilistic arguments.} As a bonus, our proof reveals a previously unexplored symmetry between the Root and Rost solutions \cite{Ro76} to \eqref{SEP}.
\\

As a brief taste of the style of argument we will use, and to highlight its fundamental nature, let us first consider equation \eqref{switch}. Introduce a second Brownian motion $W$, independent of $B$, running from right to left and started on the right hand side of the rectangle $D$ at the point $(T,y)$. 
For any $s \in [0,T]$ we consider the stopping times
\begin{align*}
    \sigma_s &:= \sigma \wedge (T-s)
\\    \tau_s &:= \inf\{t\geq 0 : (T-t, W_t) \not\in D\} \wedge s
\end{align*}
and define 
\[
    F(s):= \E^{B_0=x, W_0=y}\left[|B_{\sigma_s} - W_{\tau_s}|\right].
\]

Then $F(0)=\E^{x}\left[|B_\sigma - y|\right]$ and $F(T)=\E^y \left[|B_\sigma - x|\right]$. We will show that the function $s\mapsto F(s)$ is constant.
In fact we will first show this for a discrete time version where we replace $B$ by a random walk $X$ and $W$ by a random walk $Y$. For this discrete version, it is not difficult to see that $F(s)=F(s-1)$ (cf. Fig.\ 1 or Section \ref{sec core 1d} for details) so that $F$ is constant. From there, \eqref{switch} follows from an application of Donsker's theorem.
\begin{figure}[H]
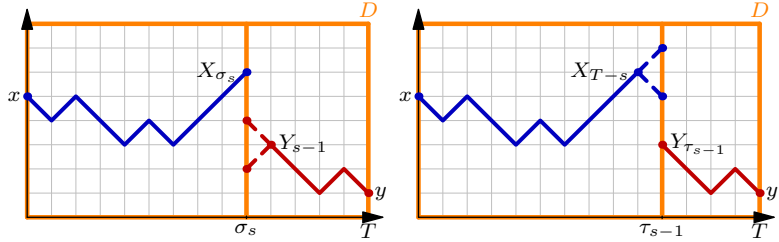

\begin{subfigure}{.40\linewidth}
\centering
\input{Prelude1.tex}
\end{subfigure}
\begin{subfigure}{.40\linewidth}
\centering
\input{Prelude2.tex}
\end{subfigure}
\caption{Illustration of $F(s) = F(s-1)$.}
\end{figure}

%
%
%
%
%
%
%
%
%

The Rost optimal stopping problem was subject of investigation in \cite{MC91} by McConnell where it is derived via classical PDE methods and in \cite{DA18} by De Angelis where a probabilistic proof is given relying on stochastic calculus.
Furthermore, the Root optimal stopping problem was also derived by Gassiat, Oberhauser and Zou in \cite{GaObZo19} where a suitable extension for a much wider class of Markov processes is established using classical potential theoretic methods as well as by Cox, Ob{\l}{\'o}j and Touzi in \cite{CoObTo18} where a multi-marginal extension of the problem is found.

\subsection{Overview}
In Section \ref{SSRW case} we shall establish \eqref{eq:main_intro} in the context of simple symmetric random walks (SSRW) on the integer lattice, {as well as a related identity pertaining Rost stopping times.} 
Interestingly, the symmetry between Root and Rost cases will be obtained as a consequence of a simple time-reversal principle. 
Then in Section \ref{Multidimensional case} we explore extensions to the multidimensional setting. 
In Section \ref{Brownian case} we will give some remarks on the passage to continuous time and in Section \ref{conclusions} we will draw some future perspectives.

{
\section{The Root and Rost optimal stopping problems}
}

Recall the notion of Skorokhod embedding problem \eqref{SEP} from the introduction. 
This problem was first formulated and solved by Skorokhod \cite{Sk61,Sk65}, and numerous new solutions have been found since. 
We refer to the surveys of Hobson \cite{Ho11} and Ob{\l}{\'o}j \cite{Ob04} for an account of many of these solutions.
To guarantee well-posedness, we assume thoroughout that $\lambda,\mu$ have finite first moment and are in convex order.

Let $W$ denote a one-dimensional Brownian motion (in keeping with the introduction, we think of $W$ as a Brownian motion running backwards; the reason for this will become clear in the following section).
Suppose we are given initial and target distributions $\lambda$ and $\mu$, we want to study the Root \cite{Ro69} resp.\ Rost \cite{Ro76} solution to the corresponding \eqref{SEP}.  
While Root and Rost solutions are most commonly given as hitting times of so called \emph{barriers}, specific subsets of $\R^2$, keeping the previous notation, we will denote by $D^{Root}$ (resp.\ $D^{Rost}$) the continuation set of the Root (resp.\ Rost) embeddings which can be seen as the complements of the barriers in $\R^2$.

Let us write $\mu^{Root}$  (resp. $\mu^{Rost}$) for the law of the Brownian motion starting with distribution $\lambda$ at the time it leaves $D^{Root}$ (resp. $D^{Rost}$) and $\mu_T^{Root}$  (resp.\ $\mu_T^{Rost}$) for the time it leaves $D^{Root}\cap([0,T)\times\R)$ (resp.\ $D^{Rost}\cap([0,T)\times\R)$).
Recall that the potential of a measure $m$ is denoted by 
$U_{m}(y):=-\int|y-x|\, m(\di x)$, 
and for a random variable $Z$ we write $U_Z$ for the potential of the law of $Z$. 
Throughout this note we consider \emph{optimal stopping problems}, thus suprema taken over $\tau$ (resp. $\sigma$) will denote suprema over stopping times.

The relations of interest in our article, found in \cite{CoWa13, CoWa12}, are
\begin{align} \label{toprovebrownian}
    U_{\mu_{T}^{Root}}(x)   &= \Ex \left [ U_{\mu^{Root}}\left(W_{\tau^{*} }\right)\1_{\tau^{*}<T} + U_{\lambda}\left(W_{\tau^{*}}\right)\1_{\tau^{*}=T}  \right ] \\\label{toprovesupbrownian}
                            &= \sup\limits_{ \tau\leq T}\Ex \left [ U_{\mu^{Root}}(W_{\tau})\1_{\tau<T} + U_{\lambda}(W_{\tau})\1_{\tau=T}  \right ] , 
\end{align}
where the optimizer is $\tau^*:=\inf\{t \geq 0: (T-t,W_t)\notin D^{Root} \}\wedge T$, and
\begin{align} \label{toproveRostbrownian}
 U_{\mu^{Rost}}(x)- U_{\mu_T^{Rost}}(x) &= \Ex \left[ \left(U_{\mu^{Rost}}-U_{\lambda}\right)(W_{\tau_*})  \right] 
 \\\label{toproveRostsupbrownian}
                                        &= \sup\limits_{\tau\leq T}\Ex \left[ \left(U_{\mu^{Rost}}-U_{\lambda}\right)(W_{\tau})  \right],
\end{align} 
where the optimizer is $\tau_*:=\inf\{t \geq 0: (T-t,W_t)\notin D^{Rost} \}\wedge T$.
\\

{
Remark that \eqref{toprovebrownian}-\eqref{toprovesupbrownian} are related to \eqref{eq:main_intro} in the introduction. On the other hand, \eqref{toproveRostbrownian}-\eqref{toproveRostsupbrownian} haven't been stated yet. One of the contributions of the article will be to obtain the latter as a consequence of the former, by means of a previously unexplored symmetry between Root and Rost solutions, as hinted at in the introduction.

%

In the coming section} we establish the above results \eqref{toprovebrownian}-\eqref{toproveRostsupbrownian} in the context of simple symmetric random walks (SSRW) on the integer lattice. 

%
%
%
%
%
%
%
%
%
\section{A common one-dimensional random walk framework}\label{SSRW case}
Consider a set $D \subseteq \mathbb{Z}\times \mathbb{Z}$ satisfying
\begin{itemize}
\item If $(t,m)\in D$, then for all $s<t$ also $(s,m)\in D$.
\end{itemize}

This should be seen as a discretised version of the Root continuation set defined in the introduction.
Likewise a Rost continuation set can be cast in the above form after reflection w.r.t.\ a vertical line.\\ 

\textbf{Notation:}
Denote by $X,Y$ two mutually independent SSRW on some probability space $(\Omega, \P)$ which are started at possibly random initial positions. 
Given $x,y\in \Z$ we write $\P^x$ and $\P_y$ for the conditional distribution given $X_0=x$ and $Y_0=y$ resp.\ Similarly $\P^x_y $ means that we condition on both events simultaneously.
We can consider a probability measure $\lambda$ on $\Z$ a ``starting'' distribution by setting $\P^\lambda := \sum_{x\in \Z} \P^x\lambda(\{x\})$, etc.
Let us then introduce the stopping time
\begin{align*}
\rho^{Root} &= \inf \{t\in \N:(t,X_t)\notin D\},
\end{align*}
where $\N = \{0,1,\dots\}$.
We define by $\mu^{Root}$
the law of $X_{\rho^{Root}}$ under $\P^{\lambda}$,  
and assume henceforth that the martingale $\left(X_{\rho^{Root}\wedge t}\right)_{t\in\N}$ is uniformly integrable. 
We conveniently drop the dependence of $\mu^{Root}$ on $\lambda$.
Given $T\in\N$ we write $\mu^{Root}_T$ for
the $\P^{\lambda}$-law of $X_{\rho^{Root}\wedge T}$.
Note that this definition is equivalent to $\mu^{Root}_T$ being the law of $X$ started with distribution $\lambda$ at the time it leaves $D^{Root} \cap \left(\{0,\dots, T-1\} \times \Z \right)$.
We will first establish the following identity, which is a discrete-time version of \eqref{toprovebrownian}-\eqref{toprovesupbrownian}
\begin{align} \label{toprove}
 U_{\mu_{T}^{Root}}(y)& = \Ey \left [ U_{\mu^{Root}}\left(Y_{\tau^{*} }\right)\1_{\tau^{*}<T} + U_{\lambda}\left(Y_{\tau^{*}}\right)\1_{\tau^{*}=T}  \right ] \\\label{toprovesup}
 &= \sup\limits_{\tau \leq T} \Ey \left [ U_{\mu^{Root}}(Y_{\tau})\1_{\tau<T} + U_{\lambda}(Y_{\tau})\1_{\tau=T}  \right ] ,
\end{align}
where the optimizer is 
\( \tau^*:=\inf\{t\in\N: (T-t,Y_t)\notin D \}\wedge T.\)

From here we will derive the discrete-time analogue of \eqref{toproveRostbrownian}-\eqref{toproveRostsupbrownian}.
%
%
%
%
%
\subsection{Core argument}\label{sec core 1d}

For convenience of the reader we present here the basis of the argument which we repeatedly use, namely that for $s \in \{1, \dots, T\}$
\begin{equation}\label{eq:trivial}
\Exy\left [ |X_{T-s}-Y_{s}|\right ] = \Exy\left [ |X_{T-(s-1)}-Y_{s-1}|\right ] .
\end{equation}
This represents a formalisation of the discretised argument in the
prelude. 
Indeed,
\begin{align*}
    \Exy\left [ |X_{T-s}-Y_{s}|\right ] 
        &= \Exy\left[\Exy\left[ |X_{T-s}-Y_{s}| \big| X_{T-s},Y_{s-1}\right]\right ] \\
        &= \Exy\left[\Exy\left[ |(X_{T-s} - Y_{s-1})-(Y_{s}-Y_{s-1})|\big| X_{T-s},Y_{s-1}\right]\right] \\
        &= \Exy\left[ |X_{T-s}-Y_{s-1}|+\1_{X_{T-s}=Y_{s-1}}\right ]\\
        &= \Exy\left[\Exy\left[ |(X_{T-s}-Y_{s-1})+(X_{T-s+1} -X_{T-s})|\big| X_{T-s},Y_{s-1}\right ]\right ] \\
        &= \Exy\left[\Exy\left[ |X_{T-s+1}-Y_{s-1}|\big| X_{T-s},Y_{s-1}\right ]\right ] \\
        &= \Exy\left[ |X_{T-(s-1)}-Y_{s-1}|\right ],
\end{align*}
%
which is best read from the top until the middle equality and then from the bottom until the same equality. 
More important than \eqref{eq:trivial} is the reasoning above, especially the appearance of the indicator of the event $\{X_{T-s}=Y_{s-1}\}$, which stems from the fact that $Y_{s-1}$ (resp. $X_{T-s}$) always splits into $Y_{s}\pm 1$ (resp. $X_{T-(s-1)}\pm 1$) with probability $1/2$.
For an illustration of this phenomenon also see Figure \ref{CoreArgumentPlot}.
%
\begin{figure}
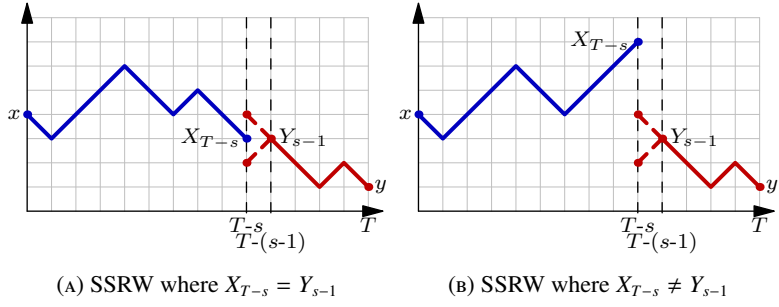

\begin{subfigure}{.40\linewidth}
\centering
\input{CoreArgument1.tex}
\caption{SSRW where $X_{T-s} = Y_{s-1}$}
\end{subfigure}
\begin{subfigure}{.40\linewidth}
\centering
\input{CoreArgument2.tex}
\caption{SSRW where $X_{T-s} \neq Y_{s-1}$}
\end{subfigure}
\caption{Illustrating the appearance of the indicator function in the core argument.}
\label{CoreArgumentPlot}
\end{figure} %
%
%
%
%
%
\subsection{The Root case}\label{rootcase}
Let $\tau^* := \min\{t \in \N: (T-t,Y_t) \not\in D\} \wedge T$. We start with a useful observation:
\begin{remark}\label{obspotentials}
The equality $U_{\mu^{Root}}\left(Y_{\tau^{*}}\right)\1_{\tau^{*}<T} =U_{\mu_T^{Root}}\left(Y_{\tau^{*}}\right)\1_{\tau^{*}<T} $ holds. 
Indeed, let $z=Y_{\tau^{*}} $ on $\{\tau^{*}<T\}$.
Then $(T-\tau^{*},z)\notin D$ and hence $(X_{T}-z)(X_{\rho^{Root}}-z)\geq 0$ on $\{\rho^{Root}>T\}$ as otherwise $X$ would have left $D$ before $\rho^{Root}$, see also Figure \ref{RootRemarkPlot}.
This implies
\begin{align}
    -U_{\mu^{Root}}(z)
        &=\Elambda\left[|X_{\rho^{Root}} - z|\right]\nonumber\\
        &=\Elambda\left[|X_{\rho^{Root}\wedge T} - z|\1_{\rho^{Root}\leq T} - (X_{\rho^{Root}} - z)\1_{\rho^{Root}> T,\, z>X_T} + (X_{\rho^{Root}} - z)\1_{\rho^{Root}> T,\, z\leq X_T}  \right] \nonumber\\
        &=\Elambda\left[|X_{\rho^{Root}\wedge T} - z|\1_{\rho^{Root}\leq T} - (X_{\rho^{Root}\wedge T} - z)\1_{\rho^{Root}> T,\, z>X_T} + (X_{\rho^{Root}\wedge T} - z)\1_{\rho^{Root}> T,\, z\leq X_T}  \right] \nonumber\\
        &=\Elambda\left[|X_{\rho^{Root}\wedge T} - z|\right]=-U_{\mu^{Root}_T}(z).
\end{align}
Accordingly we may replace $\mu^{Root}$ by $\mu^{Root}_T$ in \eqref{toprove} (but we do not do so in \eqref{toprovesup}).
\end{remark}

\begin{figure}
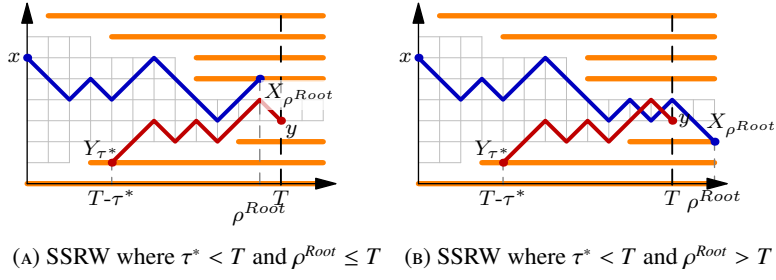

\begin{subfigure}{.40\linewidth}
\centering
\input{SSRWPlot_Remark31_1.tex}
\caption{SSRW where $\tau^* < T$ and $\rho^{Root}\leq T$}
\end{subfigure}
\begin{subfigure}{.40\linewidth}
\centering
\input{SSRWPlot_Remark31_2.tex}
\caption{SSRW where $\tau^* < T$ and $\rho^{Root} > T$}
\end{subfigure}
\caption{Illustrating properties of Root stopping times.}
\label{RootRemarkPlot}
\end{figure}

Given a $Y$-stopping time $\tau\leq T$, we define a stopping time $\sigma(\tau)$ of $X$ as the first time before $T-\tau$ that $X$ leaves $D$, i.e. $\sigma(\tau):= \rho^{Root} \wedge (T-\tau)$.\footnote{Formally, $\tau$ is a stopping time w.r.t.\ the filtration $\G=(\G_t)_{t\in\N}$,  where $\G_t=\sigma(\{Y_u:u\leq t\})$. $\sigma(\tau)$ is a stopping time w.r.t.\ the filtration $\F=(\F_s)_{s\leq T} $, where $\F_s=\sigma (\{ X_u, Y_t: u \leq s, t\leq T\})$}
For any $y \in \Z$ we now introduce the crucial interpolating function
\begin{equation}\label{interpolatingfun}
    F(s)\;\;:=\;\; F^{\tau^*}(s)\;\;:=\;\;\Elambday\left [| X_{\sigma(\tau^*\wedge s)} - Y_{\tau^*\wedge s} | \right ]\hspace{5pt}\mbox{ for }s\in\{0,\dots,T\}.
\end{equation}
It may  help to picture  $Y$ evolving ``leftwards'' from the lattice point $(T,y)$ at time zero, so that its exit time $\tau^*$ from $D$ before $T$ is measured as $T-\tau^*$ for the ``rightwards'' process $X$.
\begin{remark}\label{remark:interpolation}
We see that $\sigma(0)=\rho^{Root}\wedge T$, so consequently 
\[
    F(0)=\Elambda\left[| X_{\rho^{Root}\wedge T} - y|\right]=-U_{\mu^{Root}_{T}}(y).
\]
On the other hand,
$\sigma(\tau^*)=\rho^{Root}\wedge (T-\tau^*)$, so
\begin{align*}
F(T)    &=\Elambday\left[|X_{\rho^{Root} \wedge (T-\tau^*)} - Y_{\tau^*} |\right] 
\\      &= \Elambday\left[ |X_{\rho^{Root} \wedge (T-\tau^*)} - Y_{\tau^*}|\1_{\tau^*<T} + |X_{\rho^{Root} \wedge (T-\tau^*)} - Y_{\tau^*}|\1_{\tau^*=T} \right]
\\      &= \Elambday\left[ |X_{\rho^{Root} \wedge T} - Y_{\tau^*}|\1_{\tau^*<T} + |X_{0} - Y_{\tau^*}|\1_{\tau^*=T}\right]
\\        &= -\Ey\left[ U_{\mu^{Root}_T}\left(Y_{\tau^*}\right)\1_{\tau^*<T} + U_{\lambda}( Y_{\tau^*})\1_{\tau^*=T}\right] ,
\end{align*}
%
by independence and by applying the appropriate analogue of the argument in Remark \ref{obspotentials} for the third equality.
\end{remark}

We can now prove \eqref{toprove} and \eqref{toprovesup}; we treat the
cases separately.
%
%
%
%
%
\begin{lemma}
\label{constancydiscreteRoot}
The function $F$ is constant. Consequently
\[ U_{\mu_{T}^{Root}}(y) =\Ey \left [ U_{\mu^{Root}}\left(Y_{\tau^{*} }\right)\1_{\tau^{*}<T} + U_{\lambda}\left(Y_{\tau^{*}}\right)\1_{\tau^{*}=T}  \right ]\]
\end{lemma}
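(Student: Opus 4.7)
The plan is to show that $F(s) = F(s-1)$ for every $s \in \{1,\ldots,T\}$ and then invoke Remark \ref{remark:interpolation} (which identifies $F(0)$ and $F(T)$) together with Remark \ref{obspotentials} (which allows replacing $\mu^{Root}_T$ by $\mu^{Root}$ on $\{\tau^*<T\}$) to derive the displayed identity from $F(0)=F(T)$.

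For the first step, observe that on $\{\tau^*\leq s-1\}$ the two integrands defining $F(s)$ and $F(s-1)$ coincide (both equal $|X_{\sigma(\tau^*)}-Y_{\tau^*}|$), so that
\[
F(s)-F(s-1)=\Elambday\!\left[\bigl(|X_{\sigma(s)}-Y_s|-|X_{\sigma(s-1)}-Y_{s-1}|\bigr)\1_{\tau^*\geq s}\right],
\]
with $\sigma(s)=\rho^{Root}\wedge(T-s)$ and $\sigma(s-1)=\rho^{Root}\wedge(T-s+1)$. I would then further split this event into $\{\rho^{Root}>T-s\}$ and $\{\rho^{Root}\leq T-s\}$.

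On $\{\tau^*\geq s,\,\rho^{Root}>T-s\}$ the $X$-process has not yet exited $D$, so $X_{\sigma(s)}=X_{T-s}$ and $X_{\sigma(s-1)}=X_{T-s+1}$. Here both the $X$-step from time $T-s$ to $T-s+1$ and the $Y$-step from time $s-1$ to $s$ are genuine fresh $\pm 1$ increments, and the core argument of Section \ref{sec core 1d}, applied by conditioning on $(X_{T-s},Y_{s-1})$ together with the event, shows that each of the two conditional expectations equals $|X_{T-s}-Y_{s-1}|+\1_{X_{T-s}=Y_{s-1}}$; hence the two contributions to $F(s)-F(s-1)$ cancel out. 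On $\{\tau^*\geq s,\,\rho^{Root}\leq T-s\}$ the $X$-process is already frozen at $X_{\rho^{Root}}$, so only the $Y$-step is free, and conditioning on $(X_{\rho^{Root}},Y_{s-1})$ shows that the contribution to $F(s)-F(s-1)$ equals $\Elambday[\1_{X_{\rho^{Root}}=Y_{s-1}}\1_{\tau^*\geq s,\,\rho^{Root}\leq T-s}]$.

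The main obstacle is to argue that this last quantity vanishes, since unlike the previous case there is no symmetric $X$-step left to absorb the extra indicator. This is where the barrier geometry enters: on $\{\tau^*\geq s\}$ one has $(T-s+1,Y_{s-1})\in D$, whereas on $\{\rho^{Root}\leq T-s\}$ one has $(\rho^{Root},X_{\rho^{Root}})\notin D$; by the contrapositive of the monotonicity defining $D$, any point outside $D$ at some time stays outside $D$ at all later times, so the equality $X_{\rho^{Root}}=Y_{s-1}$ combined with $\rho^{Root}<T-s+1$ would force $(T-s+1,Y_{s-1})\notin D$, a contradiction. Hence the event in question is empty, $F(s)=F(s-1)$, and $F$ is constant, completing the proof.
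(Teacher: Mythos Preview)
Your argument is correct and is essentially the same as the paper's proof: both split off the event $\{\tau^*\le s-1\}$, apply the core identity of Section~\ref{sec core 1d} once for the $Y$-step and once for the $X$-step on the remaining event, and then use the barrier monotonicity to show that the residual indicator $\1_{X_{\sigma_s}=Y_{s-1},\,\tau^*\ge s,\,\rho^{Root}\le T-s}$ vanishes. The only cosmetic difference is that the paper first rewrites $F(s)$ and $F(s-1)$ separately in terms of the common intermediate quantity $\Elambday[|X_{\sigma_s}-Y_{\tau^*_{s-1}}|]$ (equations \eqref{eq first to prove} and \eqref{eq second to prove}) before comparing, whereas you compute $F(s)-F(s-1)$ directly; the conditioning and the geometric obstruction are identical.
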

\begin{proof}
Let $0<s<T$. 
Define the stopping times $\tau^*_s:=\tau^*\wedge s$ and $\sigma_s = \sigma(\tau^* \wedge s) = \rho^{Root}\wedge(T-\tau^*_s)$.
Then
\[
    F(s) =\Elambday\left[|X_{\sigma_s} - Y_{\tau^*_s}|\right].
\]
Let us first prove that 
\begin{equation}\label{eq first to prove}
   \Elambday\left[ | X_{\sigma_s} - Y_{\tau^*_s} | \right ] 
        =\Elambday\left[ | X_{\sigma_s} - Y_{\tau^*_{s-1}} | + \1_{X_{\sigma_s} = Y_{\tau^*_{s-1}},\, \tau^*\geq s} \right ].
\end{equation}
Since
\[
   \Elambday\left[| X_{\sigma_s} - Y_{\tau^*_s}| \right ] 
        =\Elambday\left[| X_{\rho^{Root}\wedge (T-s)} - Y_{s}|\1_{{\tau^*}\geq s}  \right ] 
        +\Elambday\left[| X_{\sigma_s} - Y_{\tau^*_{s-1}}|\1_{{\tau^*}<s}  \right ],
\]
and with the appropriate analogue of the core argument \eqref{eq:trivial} (see also Figure \ref{RootCoreArgumentPlot})
\begin{align*}
    \Elambday       &\left[| X_{\rho^{Root}\wedge (T-s)} - Y_{s}|\1_{{\tau^*}\geq s}  \right ] 
\\    =&\,\,\Elambday  \left[\Elambday\left[ | X_{\rho^{Root}\wedge (T-s)} - Y_{s} | \big|X_{\rho^{Root}\wedge (T-s)}, Y_0,\dots,Y_{s-1} \right]\1_{{\tau^*}\geq s}  \right] 
\\  =&\,\,\Elambday   \left[\Elambday\left[|(X_{\rho^{Root}\wedge (T-s)}-Y_{s-1}) - ( Y_{s}-Y_{s-1})| \big| X_{\rho^{Root}\wedge (T-s)}, Y_0,\dots,Y_{s-1} \right] \1_{{\tau^*}\geq s} \right] 
\\  =&\,\,\Elambday  \left[\left( |X_{\rho^{Root}\wedge (T-s)}-Y_{s-1}|+ \1_{X_{\rho^{Root}\wedge (T-s)}=Y_{s-1}} \right)\1_{{\tau^*}\geq s} \right] 
\\  =&\,\,\Elambday   \left[| X_{\sigma_s} - Y_{\tau^*_{s-1}} |\1_{{\tau^*}\geq s}   + \1_{X_{\rho^{Root}\wedge (T-s)}=Y_{s-1} \, ,\, {\tau^*}\geq s }  \right ],
\end{align*}
%
clearly \eqref{eq first to prove} follows. 
Now let us similarly establish that
\begin{align}
   \Elambday\left[| X_{\sigma_{s-1}} - Y_{\tau^*_{s-1}}| \right ] 
        &=\Elambday\left[| X_{\sigma_s} - Y_{\tau^*_{s-1}}| + \1_{X_{\sigma_s} = Y_{\tau^*_{s-1}} ,\, \tau^*\geq s,\, \rho^{Root}> T-s} \right ]\label{eq second to prove} \\
        &=\Elambday\left[| X_{\sigma_s} - Y_{\tau^*_{s-1}}| + \1_{X_{\sigma_s} = Y_{\tau^*_{s-1}} ,\, \tau^*\geq s} \right ] .\label{eq equivalent}
\end{align}
Indeed,
\[
     \Elambday\left[| X_{\sigma_{s-1}} - Y_{\tau^*_{s-1}}| \right ] 
    =\Elambday\left[| X_{T-s+1} - Y_{{s-1}}|\1_{\tau^*\geq s,\, \rho^{Root}> T-s} \right ] 
    +\Elambday\left[| X_{\sigma_{s}} - Y_{\tau^*_{s-1}}|\1_{\{\tau^*< s\}\cup \{\rho^{Root}\leq T-s\}} \right ], 
\]
and again with the appropriate analogue of \eqref{eq:trivial}
\begin{align*}
     &\Elambday    \left[|X_{T-s+1} - Y_{{s-1}}|\1_{\tau^*\geq s,\, \rho^{Root}> T-s} \right]
\\  &\quad=\Elambday\left[\Elambday \left[|X_{T-s+1} - Y_{s-1}| \big| X_0,\dots,X_{T-s}, Y_0,\dots,Y_{s-1} \right ] \1_{{\tau^*}\geq s,\, \rho^{Root}> T-s}\right ] \\
    &\quad=\Elambday\left[\Elambday \left[| (X_{T-s}-Y_{s-1}) + (X_{T-s+1}-X_{T-s})|\big| X_0,\dots,X_{T-s}, Y_0,\dots,Y_{s-1}\right] \1_{{\tau^*}\geq s,\, \rho^{Root}> T-s} \right] 
\\  &\quad=\Elambday\left[\left(| X_{T-s} - Y_{{s-1}}| + \1_{X_{T-s}=Y_{s-1}}\right)\1_{{\tau^*}\geq s,\, \rho^{Root}> T-s}\right] \\
    &\quad=\Elambday\left[| X_{\sigma_s} - Y_{\tau^*_{s-1}}|\1_{{\tau^*}\geq s,\, \rho^{Root}> T-s}  + \1_{X_{\sigma_s}=Y_{\tau^*_{s-1}},\,{\tau^*}\geq s \, ,\, \rho^{Root}> T-s}  \right ],
\end{align*}
%
also \eqref{eq second to prove} follows.
We then see that \eqref{eq equivalent} holds true since on $\{X_{\sigma_s} = Y_{\tau^*_{s-1}},  \tau^*\geq s\}$ we have $(T-(s-1),Y_{s-1})=(T-(s-1),Y_{\tau^*_{s-1}})= (T-s+1,X_{\sigma_s})\in D$, and so by definition of $D$ necessarily $(\rho^{Root}\wedge (T-s),X_{\sigma_s})\in D$, thus $\rho^{Root}\geq T-s+1$ is fulfilled. 
The identities \eqref{eq first to prove} and \eqref{eq equivalent} now yield that $F$ is constant.
\end{proof}

\begin{figure}
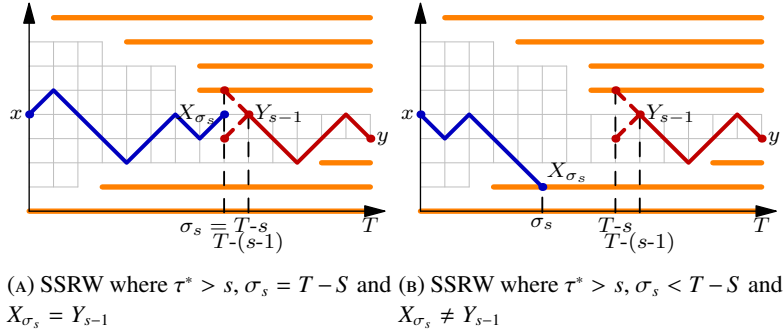

\begin{subfigure}{.40\linewidth}
\centering
\input{SSRWPlot1.tex}
\caption{SSRW where $\tau^* > s$, $\sigma_s = T-S$ and $X_{\sigma_s} = Y_{s-1}$}
\end{subfigure}
\begin{subfigure}{.40\linewidth}
\centering
\input{SSRWPlot2.tex}
\caption{SSRW where $\tau^* > s$, $\sigma_s < T-S$ and $X_{\sigma_s} \neq Y_{s-1}$}
\end{subfigure}
\caption{Illustration of the core argument in the Root setting.}
\label{RootCoreArgumentPlot}
\end{figure}
The proof of \eqref{toprovesup} follows similar lines. 
Given a $Y$-stopping time $\tau$ we consider the  interpolating function
\begin{equation}\label{interpolatingfungeneral}
F^{\tau}(s)\;\;:=\;\;\Elambday\left [| X_{\sigma(\tau\wedge s)} - Y_{\tau\wedge s} | \right ]\hspace{5pt}\mbox{ for }s\in\{0,\dots,T\}.
\end{equation}
%
\begin{lemma}
\label{monotonicitydiscreteRoot}
For every $\{0,\ldots,T\}$-valued stopping time $\tau$ of $Y$, the function $F^{\tau}$ is increasing and 
\[ U_{\mu_{T}^{Root}}(y) \geq \Ey \left [ U_{\mu^{Root}}\left(Y_{\tau }\right)\1_{\tau<T} + U_{\lambda}\left(Y_{\tau}\right)\1_{\tau=T}  \right ]\]
\end{lemma}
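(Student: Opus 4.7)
The plan is to follow the proof of Lemma \ref{constancydiscreteRoot} almost verbatim, now carried out for the general interpolating function $F^{\tau}$. With $\tau_s := \tau \wedge s$ and $\sigma_s := \sigma(\tau_s) = \rho^{Root} \wedge (T - \tau_s)$, the same conditional manipulations built on the core argument \eqref{eq:trivial} produce
\begin{align*}
F^{\tau}(s) &= \Elambday\!\left[|X_{\sigma_s} - Y_{\tau_{s-1}}| + \1_{X_{\sigma_s} = Y_{\tau_{s-1}},\, \tau \geq s}\right],\\
F^{\tau}(s-1) &= \Elambday\!\left[|X_{\sigma_s} - Y_{\tau_{s-1}}| + \1_{X_{\sigma_s} = Y_{\tau_{s-1}},\, \tau \geq s,\, \rho^{Root} > T - s}\right].
\end{align*}
The second indicator event is contained in the first, hence $F^{\tau}(s) \geq F^{\tau}(s-1)$ for every $s \in \{1, \ldots, T\}$. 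The point of departure from Lemma \ref{constancydiscreteRoot} is that the reverse inclusion, which for $\tau^{*}$ followed from $\tau^{*}$ being the first exit from the time-reversed continuation set $D$, is no longer available for arbitrary $\tau$, so only monotonicity survives.

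To conclude, I evaluate the endpoints of $F^{\tau}$. The identity $F^{\tau}(0) = -U_{\mu_T^{Root}}(y)$ transfers unchanged from Remark \ref{remark:interpolation}, as that argument uses no specific property of $\tau^{*}$. The other end, $F^{\tau}(T) = \Elambday\!\left[|X_{\rho^{Root} \wedge (T-\tau)} - Y_{\tau}|\right]$, cannot be turned into an exact expression via Remark \ref{obspotentials}, because the geometric property $(T-\tau, Y_{\tau}) \notin D$ on $\{\tau<T\}$ was specific to $\tau^{*}$. I bound it instead: the standing uniform integrability of $(X_{\rho^{Root} \wedge n})_{n \in \N}$ together with conditional Jensen's inequality for the convex map $w \mapsto |w - z|$ yields $\Elambda[|X_{\rho^{Root} \wedge (T-t)} - z|] \leq \Elambda[|X_{\rho^{Root}} - z|]$ for every $t \leq T$ and $z \in \Z$. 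Conditioning on $(\tau, Y_{\tau})$, which is $Y$-measurable and hence independent of $X$, this lifts to
\[
F^{\tau}(T) \leq \Elambday\!\left[|X_{\rho^{Root}} - Y_{\tau}|\1_{\tau<T}\right] + \Elambday\!\left[|X_0 - Y_{\tau}|\1_{\tau=T}\right] = -\Ey\!\left[U_{\mu^{Root}}(Y_{\tau})\1_{\tau<T} + U_{\lambda}(Y_{\tau})\1_{\tau=T}\right],
\]
where the last equality again uses independence of $X$ and $Y$. Chaining $F^{\tau}(0) \leq F^{\tau}(T)$ with this upper bound rearranges into the claimed inequality.

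The main obstacle I foresee is precisely this endpoint treatment: for $\tau = \tau^{*}$, Remark \ref{obspotentials} delivered an exact equality at $s = T$, whereas for a generic $\tau$ the replacement of $\mu_T^{Root}$ by $\mu^{Root}$ becomes an inequality. Fortunately the inequality has the correct sign for the desired bound, and the uniform integrability of the Root-stopped martingale combined with Jensen's inequality supplies it cleanly.
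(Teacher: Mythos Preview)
Your proof is correct and follows essentially the same approach as the paper: you derive the two indicator representations for $F^{\tau}(s)$ and $F^{\tau}(s-1)$ exactly as in Lemma~\ref{constancydiscreteRoot}, observe the event inclusion gives monotonicity, and handle the endpoint $F^{\tau}(T)$ via Jensen's inequality and uniform integrability just as the paper does. The only cosmetic difference is that the paper phrases the endpoint bound as ``$s\mapsto U_{X_{\rho^{Root}\wedge s}}(z)$ is decreasing'' whereas you spell out the conditioning on $(\tau,Y_\tau)$ explicitly.
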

\begin{proof}
Clearly $F^{\tau}(0)= -U_{\mu_{T}^{Root}}(y)$. On the other hand,
\begin{align*}
    F^{\tau}(T) &=\Elambday\left[ |X_{\rho^{Root}\wedge (T-\tau)} - Y_{\tau}|\1_{\tau<T} + |X_{0} - Y_{\tau}|\1_{\tau=T} \right] \\
                &= -\Elambday\left[ U_{X_{\rho^{Root}\wedge (T-\tau)}}(Y_{\tau})\1_{\tau<T} + U_{\lambda}(Y_{\tau})\1_{\tau=T} \right] \\
                &\leq -\Elambday\left [U_{X_{\rho^{Root}}}(Y_{\tau})\1_{\tau<T} + U_{\lambda}(Y_{\tau})\1_{\tau=T}\right ] ,
\end{align*}
where the inequality is a consequence of the potentials $s\mapsto U_{X_{\rho^{Root}\wedge s}}(z)$ being decreasing in $s$ for each $z$ (by Jensen's inequality and optional sampling) and the martingale $\left(X_{\rho^{Root}\wedge t}\right)_{t \in \N}$ being uniformly integrable. 
Thus if we show that $F^{\tau}(\cdot)$ is increasing, we can conclude.

Let $0<s<T$. 
Define the stopping times $\tau_s:=\tau\wedge s$ and $\sigma_s=\rho^{Root}\wedge(T-\tau_s)$. 
Then, analogous to the proof of Lemma \ref{constancydiscreteRoot}, but replacing $\tau^*$ by $\tau$, we get
\begin{align*}
    F^{\tau}(s) &=\Elambday\left[|X_{\sigma_s} - Y_{\tau_{s-1}}| + \1_{X_{\sigma_s} = Y_{\tau_{s-1}}, \tau\geq s} \right ]
\\              &\geq\Elambday\left[| X_{\sigma_s} - Y_{\tau_{s-1}}| + \1_{X_{\sigma_s} = Y_{\tau_{s-1}}, \tau\geq s,\,\rho^{Root}\geq T-(s-1)} \right ]
                = F^{\tau}(s-1),
\end{align*}
thus $F^{\tau}(\cdot)$ is increasing.
\end{proof}
%
\begin{remark}
\label{rem generalisation}
The second part of the preceding proof, stating that the function $F^{\tau}$ is increasing, yields after trivial modifications that also
\begin{equation}\label{eq: aux}
s\mapsto F_{\sigma}^{\tau}(s):=\Elambday\left [| X_{\sigma\wedge(T- \tau\wedge s)} - Y_{\tau\wedge s} | \right ],
\end{equation}  
is increasing. 
We did not use the particular structure of $\rho^{Root}$ there.
\end{remark}

\begin{remark} There may be many other interpolating functions (which must coincide when $\tau=\tau^*$ of course). 
For example, if we replace $\sigma(\tau\wedge s)=\rho^{Root}\wedge (T - \tau \wedge s)$ by
\[ \sigma(\tau, s):=\begin{cases}
\rho^{Root} & \text{if } \tau < s\\
\rho^{Root} \wedge {(T-s)} & \text{else } 
\end{cases},
\]
and then define
\begin{equation}\label{interpolatingfungeneral_alternative}
\tilde{F}^{\tau}(s)\;\;:=\;\;\Elambday\left [| X_{\sigma(\tau, s)} - Y_{\tau\wedge s} | \right ]\hspace{5pt}\mbox{ for }s\in\{0,\dots,T\},
\end{equation}
we have $\tilde{F}^{\tau}(0)= -U_{\mu_{T}^{Root}}(y)$ and 
$\tilde{F}^{\tau}(T)=-\Elambday\left[ U_{X_{\rho^{Root}}}(Y_{\tau})\1_{\tau<T} + U_{\lambda}(Y_{\tau})\1_{\tau=T}\right]$, for each stopping time $\tau\in [0,T]$. 
This function can be seen to be increasing for each such $\tau$ and constant for $\tau^*$. \end{remark}
%
%
%
%
%
%
%
%
\subsection{The Rost case as a consequence of the Root case}\label{rostcase}
Emboldened by the results in the Root case, we could proceed to establish \eqref{toproveRostbrownian}-\eqref{toproveRostsupbrownian} in a SSRW setting via interpolating functions as well. 
It is much more illuminating and elegant, however, to deduce the Rost case from the Root one. 
We thus keep the notation as in the previous part.

\begin{proposition} \label{SymmetryProposition}
For each $x,y,T$, any stopping time $\tau$  for $Y$ such that $\Ey[|Y_{\tau}|]<\infty$, and every $\{0,\dots,T\}$-valued stopping time $\sigma$ for $X$, we have
\begin{align}\label{eq: first simmetry}
    \Ey\left[|x - Y_{\tau}|-|x - Y_{\tau\wedge T}|\right]&\leq \Exy\left[|X_{\sigma} - Y_{\tau}|-|X_{\sigma}-y| \right].
\end{align}
Suppose furthermore that 
\begin{equation}
\tau = \inf \{t\in \N: (T-t,Y_t)\notin D\},\label{eq: geom tau}
\end{equation} 
and that
$\sigma=\rho^{Root}\wedge T$. Then there is equality in \eqref{eq: first simmetry}. 
\end{proposition}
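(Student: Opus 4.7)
The plan is to adapt the interpolation strategy of Lemma~\ref{constancydiscreteRoot} and Remark~\ref{rem generalisation} by allowing the parameter $s$ to range over all of $\N$, rather than just $[0,T]$. Concretely, for $s\geq 0$ I would define
\[
G(s):=\Exy\!\left[\bigl|X_{\sigma\wedge(T-\tau\wedge s)_+}-Y_{\tau\wedge s}\bigr|\right],\qquad (a)_+:=\max(a,0).
\]
On $[0,T]$ this is literally the function $F_{\sigma}^{\tau\wedge T}$ of Remark~\ref{rem generalisation}, and it extends meaningfully past $T$: there the inner $X$-clock saturates at $0$, causing $X_0=x$ to appear. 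A direct computation gives $G(0)=\Exy[|X_{\sigma}-y|]$, the limiting value $G(\infty):=\Exy[|X_{\sigma\wedge(T-\tau)_+}-Y_{\tau}|]$ (attained as soon as $s$ is large enough that $\tau\leq s$ almost surely), and, after splitting according to $\{\tau\leq T\}$ versus $\{\tau>T\}$,
\[
G(\infty)-G(T)=\Ey\!\left[(|x-Y_{\tau}|-|x-Y_{T}|)\1_{\tau>T}\right]=\Ey\!\left[|x-Y_{\tau}|-|x-Y_{\tau\wedge T}|\right],
\]
the left-hand side of \eqref{eq: first simmetry}.

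From here two elementary estimates close the argument. First, Remark~\ref{rem generalisation} applied to $F_{\sigma}^{\tau\wedge T}$ yields that $G$ is nondecreasing on $[0,T]$, so $G(T)\geq G(0)$. Second, since $X$ and $Y$ are independent, $|X_t-z|$ is a submartingale in $t$ for every fixed $z\in\Z$; hence Jensen's inequality applied conditionally on $Y$ and on the $\sigma$-field at time $\sigma\wedge(T-\tau)_+$, together with optional sampling on the bounded stopping times $\sigma\wedge(T-\tau)_+\leq\sigma\leq T$, gives
\[
\Exy\!\left[|X_{\sigma}-Y_{\tau}|\right]\;\geq\;\Exy\!\left[|X_{\sigma\wedge(T-\tau)_+}-Y_{\tau}|\right]\;=\;G(\infty).
\]
Chaining the two estimates,
\[
\Ey[|x-Y_{\tau}|-|x-Y_{\tau\wedge T}|]=G(\infty)-G(T)\;\leq\;G(\infty)-G(0)\;\leq\;\Exy[|X_{\sigma}-Y_{\tau}|]-\Exy[|X_{\sigma}-y|],
\]
which is \eqref{eq: first simmetry}.

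For the equality statement, with $\tau=\inf\{t\in\N:(T-t,Y_t)\notin D\}$ and $\sigma=\rho^{Root}\wedge T$, both intermediate inequalities become equalities. The identity $G(T)=G(0)$ is exactly Lemma~\ref{constancydiscreteRoot} applied with $\tau^{*}=\tau\wedge T$, since for this choice of $\sigma$ the restriction $G|_{[0,T]}$ coincides with $F^{\tau^{*}}$ (observe $(\rho^{Root}\wedge T)\wedge(T-\tau^{*}\wedge s)=\rho^{Root}\wedge(T-\tau^{*}\wedge s)$ for $s\leq T$). The identity $G(\infty)=\Exy[|X_\sigma-Y_\tau|]$ is the Rost-side analogue of Remark~\ref{obspotentials}: by construction $(T-\tau,Y_\tau)\notin D$, so the Root property of $D$ forces $(s,Y_\tau)\notin D$ for every $s\geq T-\tau$; in particular, on $\{\tau>T\}$ (where $T-\tau<0$) one has $(0,Y_\tau)\notin D$, which is where the convention $D\subseteq\Z\times\Z$ rather than $\N\times\Z$ becomes crucial. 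Hence $X$, started from $x\in D$, cannot cross the level $Y_\tau$ on the random interval $(\sigma\wedge(T-\tau)_+,\sigma]$ without being immediately stopped by the Root barrier, so $X_\sigma$ and $X_{\sigma\wedge(T-\tau)_+}$ sit on the same side of $Y_\tau$; optional sampling on the martingale $X$ then gives the pointwise identity $\Ex[|X_\sigma-z|]=\Ex[|X_{\sigma\wedge(T-\tau)_+}-z|]$ for $z=Y_\tau$, which integrates to the required equality.

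The hard part will be the second equality in the equality case: one must carry out a sign/no-crossing analysis in the spirit of Remark~\ref{obspotentials}, with the additional subtlety that on $\{\tau>T\}$ the Root barrier has to be invoked at negative times, using the full convention $D\subseteq\Z\times\Z$ to conclude $(0,Y_\tau)\notin D$.
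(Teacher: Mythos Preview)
Your proposal is correct and takes essentially the same route as the paper: the paper first proves the intermediate inequality \eqref{eq: intermed} (in your notation $G(\infty)\leq\Exy[|X_\sigma-Y_\tau|]$) by the same Jensen/optional-sampling split on $\{\tau<T\}$ versus $\{\tau\geq T\}$, then invokes Remark~\ref{rem generalisation} for $F^\tau_\sigma(T)\geq F^\tau_\sigma(0)$ (your $G(T)\geq G(0)$), and handles the equality case via Lemma~\ref{constancydiscreteRoot} together with the no-crossing argument of Remark~\ref{obspotentials}. Your extension of the interpolating function past $T$ so that $G(\infty)-G(T)$ equals the left-hand side exactly is a tidy repackaging rather than a different idea, and your observation that the case $\tau>T$ genuinely relies on the convention $D\subseteq\Z\times\Z$ (so that $(T-\tau,Y_\tau)\notin D$ forces $(0,Y_\tau)\notin D$) is correct and implicit in the paper as well.
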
 
\begin{proof}
We first prove the inequality
\begin{equation} \label{eq: intermed}
    \Ey\left[|x - Y_{\tau}|-|x - Y_{\tau\wedge T}| \right] \leq  \Exy\left[|X_{\sigma} - Y_{\tau}|-|X_{\sigma \wedge (T - \tau\wedge T)} - Y_{\tau\wedge T}|\right].
\end{equation}
This follows, on the one hand, by
\[
    \Ey\left[\left(|x - Y_{\tau}|-|x - Y_{\tau\wedge T}|\right)\1_{\tau <T} \right] = 0 \leq \Exy\left[ \left( |X_{\sigma} - Y_{\tau}|-|X_{\sigma\wedge (T-\tau)} - Y_{\tau}| \right)\1_{\tau <T} \right],
\]
where the inequality follows by Jensen's inequality and optional sampling.
Similarly we also conclude by Jensen and optional sampling that
\[
    \Ey\left[\left(|x - Y_{\tau}| - |x - Y_{\tau\wedge T}|\right)\1_{\tau \geq T} \right] \leq \Exy\left[\left(|X_{\sigma} - Y_{\tau}| - |X_{0} - Y_{ T}| \right) \1_{\tau \geq T} \right].
\]
We furthermore note that considering $F^{\tau}_{\sigma}$ as defined in
\eqref{eq: aux} for the choice $\lambda=\delta_x$ we have that the
r.h.s.\ of \eqref{eq: first simmetry},  resp.\ of \eqref{eq: intermed}, coincides with 
$\Exy \left[|X_{\sigma} - Y_{\tau}|\right] - F^{\tau}_{\sigma}(0)$, resp. $\Exy \left[|X_{\sigma} - Y_{\tau}|\right] - F^{\tau}_{\sigma}(T)$.
We can now conclude from Remark \ref{rem generalisation}, stating that $F_{\sigma}^{\tau}$ is increasing, the desired result \eqref{eq: first simmetry}. 

In the case $\sigma=\rho^{Root}\wedge T$ and $\tau$ fulfilling \eqref{eq: geom tau}, we obtain that $F_{\rho^{Root}\wedge T}^{\tau} = F^{\tau\wedge T}_{\rho^{Root}\wedge T}=F^{\tau^*}=F $ on $[0,T]$, by \eqref{eq: geom tau}, which by Lemma \ref{constancydiscreteRoot} is constant. 
So to conclude we must show that 
\[
    \Ey\left[|x - Y_{\tau}| - |x - Y_{\tau\wedge T}| \right] = \Exy\left[|X_{\rho^{Root}\wedge T} - Y_{\tau}|\right] - F(T).
\]
We can use the arguments in Remark \ref{obspotentials} resp. \ref{remark:interpolation} to obtain
\[
    \Ey\left[\left(|x - Y_{\tau}| - |x - Y_{\tau\wedge T}|\right)\1_{\tau <T} \right] = 0 =  \Exy\left[\left(|X_{\rho^{Root}\wedge T} - Y_{\tau}| - |X_{\rho^{Root} \wedge (T-\tau)} - Y_{\tau}| \right)\1_{\tau <T} \right].
\]
Similarly also
\[
    \Ey\left[\left(|x - Y_{\tau}| - |x - Y_{\tau\wedge T}|\right)\1_{\tau \geq T} \right] = \Exy\left[\left(|X_{\rho^{Root}\wedge T} - Y_{\tau}| - |X_{0} - Y_{T}| \right)\1_{\tau \geq T}\right],
\]
which concludes the proof.
\end{proof}
%
%
%
%
%
%
%
%
%
%
\begin{figure}
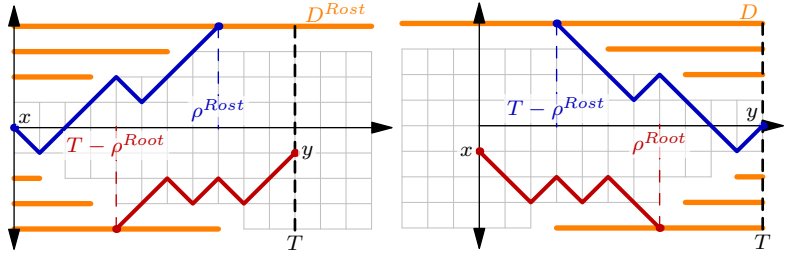

\begin{subfigure}{.40\linewidth}
\centering
\input{RootToRost1.tex}
\end{subfigure}
\begin{subfigure}{.40\linewidth}
\centering
\input{RootToRost2.tex}
\end{subfigure}
\caption{Illustration of the connection between $D^{Rost}$ and $D$, resp. between $\rho^{Rost}$ and $\rho^{Root}$.}
\label{RootRostPlot}
\end{figure}

A discrete time version of the Rost optimal stopping problem \eqref{toproveRostbrownian}-\eqref{toproveRostsupbrownian} can now be established as a consequence of Proposition \ref{SymmetryProposition}.
A Rost continuation set is a set $D^{Rost}\subset \N \times \Z$ satisfying
\begin{itemize}
\item If $(t,m)\in D^{Rost}$, then for all $s>t$ also $(s,m)\in D^{Rost}$.
\end{itemize}
Given such a set for each fixed $T\in\N$ we may define $D := \{(T-t,m) : (t,m) \in D^{Rost} \}$ which is a Root continuation set to which the previous result is applicable. 
Let us introduce
 \begin{align}
\rho^{Rost} := \inf \{t\in\N:(T-t,Y_t)\notin D\} = \inf \{t\in\N:(t,Y_t)\notin D^{Rost}\},
\end{align} 
and let $\mu^{Rost}$ (resp. $\mu^{Rost}_T$) denote the law of a SSRW started with distribution $\lambda$ and stopped at time $\rho^{Rost}$ (resp. $\rho^{Rost} \wedge T$).
See Figure \ref{RootRostPlot} for an illustration of the connection between Root and Rost continuation sets and the respective hitting times.
We assume uniform integrability of $\left(Y_{\rho_{Rost} \wedge t}\right)_{t \in \N}$.

\begin{corollary}
We have
\begin{align}     
    U_{\mu^{Rost}}(x)- U_{\mu^{Rost}_T}(x)
        &= \Ex \left[\left(U_{\mu^{Rost}} - U_{\lambda}\right)(X_{\sigma_*})  \right]               \label{toproveRost}
    \\  &= \sup\limits_{\sigma \leq T}\Ex \left[\left(U_{\mu^{Rost}} - U_{\lambda}\right)(X_{\sigma})  \right],     \label{toproveRostsup}
\end{align} 
where the optimizer is given by
\[
    \sigma_*:=\rho^{Root}\wedge T = \inf \{t\in\N:(T-t,X_t)\notin D^{Rost}\}\wedge T.
\]
\end{corollary}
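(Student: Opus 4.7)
The proof is a direct consequence of Proposition \ref{SymmetryProposition}: it suffices to average the identity (and the inequality) over an initial value $Y_0\sim\lambda$.

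First I would observe that the Root continuation set $D=\{(T-t,m):(t,m)\in D^{Rost}\}$ is designed precisely so that $\rho^{Rost}=\inf\{t\in\N:(T-t,Y_t)\notin D\}$ fits condition \eqref{eq: geom tau}, and so that $\sigma_*=\rho^{Root}\wedge T$ is the stopping time of $X$ for which Proposition \ref{SymmetryProposition} delivers an equality. Taking $\tau=\rho^{Rost}$ and $\sigma=\sigma_*$ in the proposition, for each fixed $y$ with $\Ey[|Y_{\rho^{Rost}}|]<\infty$ (which holds $\lambda$-a.e.\ by the assumed uniform integrability), and integrating both sides against $\lambda(\di y)$, the left-hand side becomes $U_{\mu^{Rost}_T}(x)-U_{\mu^{Rost}}(x)$ by definition of the potentials. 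On the right-hand side, the independence of $X$ and $Y$ combined with Fubini, after conditioning on $X_{\sigma_*}$, turns the two terms into $-\Ex[U_{\mu^{Rost}}(X_{\sigma_*})]$ and $-\Ex[U_\lambda(X_{\sigma_*})]$ respectively. Multiplying through by $-1$ yields identity \eqref{toproveRost}.

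For the supremum in \eqref{toproveRostsup}, I would repeat the same averaging procedure but starting from the general \emph{inequality} of Proposition \ref{SymmetryProposition}, applied with $\tau=\rho^{Rost}$ and an arbitrary $X$-stopping time $\sigma\leq T$. After integration in $y$ and multiplication by $-1$ (which reverses the inequality), one obtains
\[
\Ex\left[(U_{\mu^{Rost}}-U_\lambda)(X_\sigma)\right]\leq U_{\mu^{Rost}}(x)-U_{\mu^{Rost}_T}(x),
\]
and combined with the previous identity this shows that $\sigma_*$ attains the supremum. The only mildly delicate point is the identification of $\rho^{Rost}$ with a stopping time of the form \eqref{eq: geom tau}, which is built into the definition of $D$; from there everything reduces to bookkeeping with potentials and a routine application of Fubini.
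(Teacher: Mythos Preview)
Your proof is correct and follows essentially the same route as the paper: apply Proposition \ref{SymmetryProposition} with $\tau=\rho^{Rost}$ (which satisfies \eqref{eq: geom tau} by construction of $D$), identify the two sides with the relevant potentials, and pass from $\lambda=\delta_y$ to general $\lambda$ by averaging over $y$. The paper phrases the reduction as ``first take $\lambda=\delta_y$, then extend'', while you integrate against $\lambda(\di y)$ directly, but the content is identical.
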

\begin{proof}
For $y \in \Z$ let us first consider $Y_0 = y$, i.e. $\lambda = \delta_{y}$.
Consider Proposition \ref{SymmetryProposition} for the stopping time $\tau = \rho^{Rost}$.
As
\begin{align*}
    \Ey\left[|x - Y_{\tau}|-|x - Y_{\tau\wedge T}|\right]    &= - \left( U_{\mu^{Rost}}(x) - U_{\mu^{Rost}_T}(x) \right),
\\  \Exy\left[|X_{\sigma} - Y_{\tau}|-|X_{\sigma}-y| \right]&= - \Ex \left[\left(U_{\mu^{Rost}} - U_{\lambda}\right)(X_{\sigma}) \right],
\end{align*}
due to \eqref{eq: first simmetry} we then have
\[
    U_{\mu^{Rost}}(x)- U_{\mu^{Rost}_T}(x) \leq \sup\limits_{\sigma \leq T}\Ex \left[\left(U_{\mu^{Rost}} - U_{\lambda}\right)(X_{\sigma})  \right].
\]
To prove \eqref{toproveRost} we note that $\tau = \rho^{Rost}$ satisfies \eqref{eq: geom tau}.
Thus, for $\sigma = \sigma_*$ we have equality in \eqref{eq: first simmetry} which is precisely \eqref{toproveRost} and furthermore also gives \eqref{toproveRostsup}.
As this is true for arbitrary $y \in \Z$, the extension to general $\lambda$ is clear due to identities of the form $\E^x_{\lambda}\left[|X_{\sigma} - Y_{\tau}|\right] = \sum_{y\in\Z} \Exy\left[|X_{\sigma} - Y_{\tau}|\right]  \lambda(\{y\})$.
\end{proof}
%
%
%
%
%
%
%
%
%
%

\section{The multidimensional case}\label{Multidimensional case}

We have established \eqref{toprovebrownian}-\eqref{toproveRostsupbrownian} for the integer lattice in one dimension. We shall extend this to the setting of the $d$-dimensional integer lattice $\Z^d$ for $d$ arbitrary. 

Let $Z$ be a SSRW on $\Z^d$ and let
\[z\in\Z^d \mapsto G_n(z)\, :=\,\EE^{Z_0=0}[\#\{ t\leq n:Z_t=z\}],\] 
denote the expected number of visits to site $z$ of $Z$ started in the origin, prior to $n$.
We then consider the so-called \textit{potential kernel} of the SSRW
\[z\in\Z^d \mapsto a(z)\, :=\,\lim_{n\to\infty} G_n(0)-G_n(z),\]
which is finite in any dimensions and has the desirable property that
\begin{equation}
a(z)=-\1_{z=0}+\frac{1}{2d}\sum_{z'\sim z} a(z'). \label{Laplace}
\end{equation}
Here $z'\sim z$ if $z'$ is an immediate neighbour of $z$ (corresponding to moving away from $z$ along one coordinate only, so there are $2d$ of them). 
From this follows that $\left(a(Z_t)\right)_{t\in\N}$ is a (Markovian) submartingale and by induction 
\begin{equation}
    \E\left[a(Z_{t+n})|Z_t\right]=a(Z_t)+\sum_{\ell=0}^{n-1}\P(Z_{t+\ell}=0|Z_t),\label{supermg}
\end{equation}
which is an identity we will repeatedly use. 
In the transient case ($d\geq 3$) we have that $a$ is just the negative of the expected number of visits to a point up to an additive constant. 
In the one dimensional case we have $a(\cdot)=|\cdot|$. 
We refer to \cite[Chapter 1]{Lawler_intersections} for a review of these concepts/facts.

We first observe that owing to \eqref{Laplace} the core argument \eqref{eq:trivial} in Section \ref{sec core 1d} is still valid, so for $s\in\{1,\dots, T\}$
\[
    \Exy\left[a(X_{T-s}-Y_s)\right]= \Exy\left[a(X_{T-s}-Y_{s-1})+\1_{X_{T-s}=Y_{s-1}}\right]=\Exy\left[a(X_{T-(s-1)}-Y_{s-1})\right].
\]
We shall see that all the computations we did using $z\mapsto |z|$ in the one-dimensional case are still valid for the potential kernel $a$. For a measure $\nu$ on $\Z^d$ let 
\[A.\nu(y) := - \int a(y - x)\nu(dx).\]
 As in the previous section, we denote by $X,Y$ two independent SSRW in $\Z^d$.
\begin{proposition}
Let $\lambda$ be a starting distribution in $\Z^d$ and $D^{Root}$ (resp.\ $D^{Rost}$) be Root-type (resp.\ Rost-type) continuation sets in $\Z^{d+1}$. Denote by $\mu^{Root}$ resp. $\mu_T^{Root}$ the law of a SSRW started with distribution $\lambda$ and stopped upon leaving $D^{Root}$ resp. $D^{Root} \cap \left( \{0, \dots, T-1\} \times \Z^d\right)$ (analogously for $\mu^{Rost}$ and $\mu_T^{Rost}$), and assume that the SSRW stopped when leaving $D^{Root}$ (resp.\ $D^{Rost}$) is uniformly integrable.  Then 
\begin{align} \label{G1Root}
 A.\mu_{T}^{Root}(y)& = \Ey \left [ A.\mu^{Root}\left(Y_{\tau^{*} }\right)\1_{\tau^{*}<T} + A.{\lambda}\left(Y_{\tau^{*}}\right)\1_{\tau^{*}=T}  \right ] \\\label{G2Root}
 &= \sup\limits_{ \tau\leq T}\Ey \left [ A.{\mu^{Root}}(Y_{\tau})\1_{\tau<T} + A.{\lambda}(Y_{\tau})\1_{\tau=T}  \right ] , 
\end{align}
where the optimizer is $\tau^*:=\inf\{t\in\N: (T-t,Y_t)\notin D^{Root} \}\wedge T$, and 
\begin{align}   \label{G1Rost}
 A.{\mu^{Rost}}(x)- A.{\mu_T^{Rost}}(x) &= \Ex \left[ \left(A.{\mu^{Rost}}-A.{\lambda}\right)(X_{\tau_*})  \right] 
\\              \label{G2Rost}
                                        &= \sup\limits_{\tau\leq T}\Ex \left[ \left(A.{\mu^{Rost}}-A.{\lambda}\right)(X_{\tau})  \right],
\end{align} 
where the optimizer is $\tau_*:=\inf\{t\in\N: (T-t,X_t)\notin D^{Rost} \}\wedge T$.\\
\end{proposition}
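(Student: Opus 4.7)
The plan is to follow Section \ref{SSRW case} step by step, systematically replacing $|\cdot|$ by the potential kernel $a(\cdot)$ and using \eqref{supermg} wherever the one-dimensional proof invoked Jensen's inequality together with optional sampling. The two ingredients that make this transfer possible are already in place: the discrete harmonicity \eqref{Laplace} guarantees that the core argument of Section \ref{sec core 1d} carries over verbatim (as noted in the text preceding the proposition), and \eqref{supermg} plays the role of a quantitative Jensen--optional-sampling statement for $a$.

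For the Root identities \eqref{G1Root}--\eqref{G2Root}, I would first establish the multi-dimensional analog of Remark \ref{obspotentials}, namely that on $\{\tau^*<T\}$ one has $A.\mu^{Root}(Y_{\tau^*}) = A.\mu^{Root}_T(Y_{\tau^*})$. By \eqref{supermg}, the assumed uniform integrability and optional sampling, for every fixed $z\in\Z^d$,
\[
  \Elambda\bigl[a(X_{\rho^{Root}} - z) - a(X_{\rho^{Root}\wedge T} - z)\bigr] = \Elambda\Bigl[\textstyle\sum_{\ell=T}^{\rho^{Root}-1}\1_{X_\ell = z}\Bigr].
\]
For $z=Y_{\tau^*}$ on $\{\tau^*<T\}$ (using the independence of $X$ and $Y$), the Root property of $D^{Root}$ forces $(\ell,z)\notin D^{Root}$ for every $\ell\geq T-\tau^*$, so in particular for $\ell\geq T$; hence $X_\ell\neq z$ for $T\leq\ell<\rho^{Root}$ and the right-hand side vanishes. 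Equipped with this, I would define the interpolating function
\[
  F(s):=\Elambday\bigl[a(X_{\sigma(\tau^*\wedge s)} - Y_{\tau^*\wedge s})\bigr],\qquad s\in\{0,\dots,T\},
\]
in exact parallel to \eqref{interpolatingfun} and reproduce the proof of Lemma \ref{constancydiscreteRoot} line by line: every occurrence of $|X_{T-s}-Y_s|=|X_{T-s}-Y_{s-1}|+\1_{X_{T-s}=Y_{s-1}}$ is replaced by its $a$-analog supplied by \eqref{Laplace}. The boundary values $F(0)=-A.\mu^{Root}_T(y)$ and $F(T)=-\Ey\bigl[A.\mu^{Root}(Y_{\tau^*})\1_{\tau^*<T}+A.\lambda(Y_{\tau^*})\1_{\tau^*=T}\bigr]$ then arise exactly as in Remark \ref{remark:interpolation}, using the $a$-analog of Remark \ref{obspotentials} to substitute $\mu^{Root}_T$ by $\mu^{Root}$. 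For the supremum identity \eqref{G2Root}, I would repeat Lemma \ref{monotonicitydiscreteRoot}; the only substantive change is that the inequality which in one dimension came from ``Jensen plus optional sampling'' is now the statement that $s\mapsto \Elambda[a(X_{\rho^{Root}\wedge s}-z)]$ is increasing, which is a direct consequence of \eqref{supermg} under the assumed uniform integrability.

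For the Rost identities \eqref{G1Rost}--\eqref{G2Rost}, I would first prove an $a$-analog of Proposition \ref{SymmetryProposition}. Both Jensen-plus-optional-sampling inequalities in its proof transfer directly, since $t\mapsto a(X_t-z)$ (and symmetrically $t\mapsto a(Y_t-x)$) is a submartingale by \eqref{supermg}. The remainder of the argument invokes only the monotonicity of an auxiliary interpolating function of type \eqref{eq: aux}, which will have just been established in the multi-dimensional setting. The deduction of \eqref{G1Rost}--\eqref{G2Rost} from this analog is then identical to the derivation of the corollary at the end of Section \ref{rostcase}.

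The main obstacle is the multi-dimensional analog of Remark \ref{obspotentials}. In one dimension its proof exploited the intermediate-value property of SSRW (it cannot ``jump over'' a lattice point without visiting it), which has no analog when $d\geq 2$. The workaround above recasts the desired identity as the vanishing of the submartingale compensator $\sum\1_{X_\ell=z}$ furnished by \eqref{supermg}; this compensator vanishes purely by virtue of the geometric Root-barrier property, which rules out visits to $z=Y_{\tau^*}$ at times $\ell\in[T,\rho^{Root})$, and everything else is then a cosmetic substitution of $|\cdot|$ by $a(\cdot)$ in the arguments of Section \ref{SSRW case}.
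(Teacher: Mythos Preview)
Your proposal is correct and follows essentially the same approach as the paper: define the interpolating function with $a(\cdot)$ in place of $|\cdot|$, use \eqref{supermg} both to show it is constant (resp.\ increasing) and to replace the one-dimensional intermediate-value step in Remark~\ref{obspotentials} by a vanishing-compensator argument, and then derive the Rost case from an $a$-analog of Proposition~\ref{SymmetryProposition}. The only difference is one of packaging: you first establish the analog of Remark~\ref{obspotentials} (comparing $\rho^{Root}$ with $\rho^{Root}\wedge T$) and then invoke Remark~\ref{remark:interpolation}, whereas the paper folds both steps into a single direct computation of $F(T)$ (their display \eqref{tower}, comparing $\rho^{Root}\wedge(T-\tau^*)$ with $\rho^{Root}$); note that your write-up of Remark~\ref{remark:interpolation} still implicitly needs the compensator argument on the interval $[T-\tau^*,T)$, but this is the same barrier reasoning you already gave.
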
 
\begin{proof}
Let us first prove \eqref{G1Root}.
In analogy to the previous section, we define an interpolating function
\begin{equation}\label{interpolatingfungeneral-G}
    F(s)\;\;:=\;\;\Elambday\left[ a( X_{\rho^{Root}\wedge (T- \tau^*\wedge s)} - Y_{\tau^*\wedge s} ) \right ]\hspace{5pt}\mbox{ for }s\in\{0,\dots,T\}.
\end{equation}
Then clearly $F(0) = - A.\mu_{T}^{Root}(y)$ and also 
\[
    F(T)=\Elambday[a(X_{\rho^{Root}\wedge(T-\tau^*)}-Y_{\tau^*}) \1_{\tau^{*}<T} ]-\Ey[A.{\lambda}\left(Y_{\tau^{*}}\right)\1_{\tau^{*}=T} ].
\]
If we establish $-\Elambday[a(X_{\rho^{Root}\wedge(T-\tau^*)}-Y_{\tau^*}) \1_{\tau^{*}<T} ]=\Ey  [ A.\mu^{Root}\left(Y_{\tau^{*} }\right)\1_{\tau^{*}<T}]$ then \eqref{G1Root} is implied by $F$ being constant.
Clearly it suffices to show that
\[
    \Elambday[a(X_{T-\tau^*}-Y_{\tau^*}) \1_{\tau^{*}<T,\, \rho^{Root}>T-\tau^*} ] =\Elambday[a(X_{\rho^{Root}}-Y_{\tau^*}) \1_{\tau^{*}<T,\, \rho^{Root}>T-\tau^*} ].
\]
Indeed 
\begin{align}\label{tower}
    \Elambday&\left[a(X_{\rho^{Root}}-Y_{\tau^*}) \1_{\tau^{*}<T,\, \rho^{Root}>T-\tau^*}\right]        \nonumber 
\\        &= \Elambday\left[\Elambday\left[a(X_{\rho^{Root}}-Y_{\tau^*}) \1_{\tau^{*}<T,\, \rho^{Root}>T-\tau^*} \big|X_0, \dots, X_T, Y_0, \dots, Y_{T-1} \right]  \right] \nonumber
\\      &= \Elambday\left[\left( a(X_{T-\tau^*}-Y_{\tau^*})+\sum_{s=T-\tau^*}^{\rho^{Root}-1} \P\left(X_{s}=Y_{\tau^*}  \big|X_0, \dots, X_T, Y_0, \dots, Y_{T-1}\right )\right)\1_{\tau^{*}<T,\, \rho^{Root}>T-\tau^*} \right] \nonumber
\\      &= \Elambday\left[ a(X_{T-\tau^*}-Y_{\tau^*})  \1_{\tau^{*}<T,\, \rho^{Root}>T-\tau^*}\right],
\end{align}
%
%
where the last line holds since, given 
$\{X_0, \dots, X_T, Y_0, \dots, Y_{T-1}\}$  on $\{\tau^{*}<T,\, \rho^{Root}>T-\tau^*\}$, 

We now prove that $F$ is indeed constant.
First we observe that
\[
    F(s)= \Elambday\left[a(X_{\rho^{Root}\wedge (T-\tau^*\wedge s)}-Y_{\tau^*\wedge s})\right] 
        =\Elambday\left[a(X_{\rho^{Root}\wedge (T-\tau^*\wedge s)}-Y_{\tau^*\wedge (s-1)}) + \1_{X_{\rho^{Root}\wedge(T-s)}= Y_{s-1}, \tau^*\geq s}\right].
\]
To see this we consider the two cases $\{\tau^* < s\}$ and $\{\tau^* \geq s\}$ separately. While the former case is clear, on the latter we apply \eqref{supermg} where we condition on $\{X_0,\dots,X_{T-s},Y_0,\dots,Y_{s-1} \}$.
Analogously but by splitting into $\{\tau^* < s\} \cup \{\rho^{Root}\leq T-s\}$ and $\{\tau^*\geq s, \rho^{Root}>T-s\}$ we obtain
\begin{align*}
   F(s-1) &= \Elambday\left[a(X_{\rho^{Root}\wedge (T-\tau^*\wedge (s-1))}-Y_{\tau^*\wedge (s-1)})\right] 
\\      &=\Elambday\left[a(X_{\rho^{Root}\wedge (T-\tau^*\wedge s)}-Y_{\tau^*\wedge (s-1)})
        +  \1_{X_{\rho^{Root}\wedge(T-s)} = Y_{s-1},\tau^*\geq s, \rho^{Root}>T-s}\right].
\end{align*}
We conclude by observing that the two appearing indicator functions are equal, since on $\{X_{\rho^{Root}\wedge(T-s)} = Y_{s-1}, \tau^*\geq s\}$ we must necessarily have $\rho^{Root}>T-s$.

To show \eqref{G2Root} define the multi dimensional equivalent of \eqref{interpolatingfungeneral}, that is for a $\{0,\dots,T\}$-valued $Y$-stopping time $\tau$ define
\begin{equation}
    F(s)\;\;:=\;\;\Elambday\left[ a( X_{\rho^{Root}\wedge (T- \tau\wedge s)} - Y_{\tau \wedge s} ) \right ]\hspace{5pt}\mbox{ for }s\in\{0,\dots,T\}.
\end{equation}
Then clearly $F^\tau(0) = -A.{\mu_T^{Root}}(y)$. Again we can use \eqref{supermg} to show that $F^{\tau}$ is increasing and furthermore
\[
    F^\tau(T) \leq - \Ey \left[ A.{\mu^{Root}}(Y_{\tau})\1_{\tau<T} + A.{\lambda}(Y_{\tau})\1_{\tau=T}  \right].
\]

The Rost case can be derived from the Root case by analogous arguments as in Section \ref{rostcase}. 
A multidimensional version of Proposition \ref{SymmetryProposition} can be proved verbatim replacing the absolute value by the function $a$ and the Jensen arguments by submartingale arguments.
The equality case follows from \eqref{supermg} exploiting the barrier structure as it was done for \eqref{tower}.
\end{proof}
%
%
%
%
%
%
%
%
%
%
\section{From the Random Walk Setting to the Continuous Case} \label{Brownian case}
While the passage to continuous time is in essence an application of Donsker-type results, we will give a more elaborate explanation using arguments established by Cox and Kinsley in \cite{CoKi19} for the one-dimensional case.
We note that all results and arguments in Section 3 are invariant under uniform scaling of the space-time grid.
Thus for each $N \in \N$ we can consider a rescaled simple symmetric random walk $Y^N$ with space step size $\frac{1}{\sqrt{N}}$ and time step size $\frac{1}{N}$ as it is done in \cite{CoKi19}. The authors discretise an \emph{optimal Skorokhod embedding problem}, an (SEP) featuring the following additional optimisation problem
\begin{equation}
	\inf_{\tau \text{ solves (SEP)}} \E[F(B_{\tau}, \tau)]. \tag{OptSEP}								
\end{equation}
It is known that for \emph{any} convex (resp. concave) function $f: \R_+ \rightarrow \R_+$, the (OptSEP) with $F(B_{\tau}, \tau) = f(\tau)$ is solved by a Root (resp. Rost) solution, see e.g. \cite{BeCoHu17}.
It is emphasised that the stopping time and the continuation set depend on the measures $\lambda$ and $\mu$ alone and not the specific choice of $f$.

Let $D$ be a Root (resp. Rost) continuation set and consider the corresponding measure $\mu = \mu^{Root}$ (resp. $\mu = \mu^{Rost}$). 
Following \cite{CoKi19} we obtain for each $N \in \N$ a discretisation $\mu^N$ of $\mu$ such that $\mu^N \rightarrow \mu$ and moreover $\lambda^N$ and $\mu^N$ are in convex order.
Similarly a discretisation $\lambda^N$ of $\lambda$ can be found such that $\lambda^N \rightarrow \lambda$. 
The authors then propose and solve a discretised version of the (OptSEP) for $\lambda^N$ and $\mu^N$.
The optimiser will again be of Root (resp. Rost) form, given as the first time a (scaled) random walk $Y^N$ leaves a Root (resp. Rost) continuation set $\hat D^N$.
Let $D^N$ denote a time-continuous and rescaled completion of the discrete continuation set $\hat D^N$.
In \cite[Chapter 5]{CoKi19} the authors then prove convergence of $D^N$ to $D$.
We note that in the more general setting considered in \cite{CoKi19} a recovery of the initial continuation set $D$ is not guaranteed. However, in the Root case this follows due to \cite{Lo70}.
An analogous uniqueness result for Rost solutions is also true, see e.g. \cite{Gr17} for a generalisation.

By convergence of the continuation sets it is easy to see that for every $T\geq 0$ we have $\mu^N_T \rightarrow \mu_T$.
As in this setting convergence of measures implies uniform convergence of potential functions, $U_{\mu^N_T} \rightarrow U_{\mu_T}$ (see \cite{Ch77} for
details), we have established convergence of the l.h.s of (3.1) to the l.h.s of (2.1).

Let $\left(W_t^{(N)}\right)_{t \geq 0}$ denote the continuous version of the rescaled random walk $Y^N$.
To avoid heavy usage of floor functions, we will assume $T \in I:=\left\{ \frac{m}{2^n}:m,n\in\N \right\}$.
If limits are then taken along the subsequence $\left(Y^{2^n}\right)_{n\in\N}$ (resp. $\left(W^{(2^n)}\right)_{n\in\N}$) there exists an $N_0 \in \N$ such that $T$ will always be a multiple of the step size $\frac{1}{2^n}$ for all $n \geq N_0$.
For arbitrary $T>0$ the results can be recovered via density arguments.
We define the following stopping times
\begin{align} \label{stptimes}
\begin{split}
    \hat\tau^{N*}   &=              \inf\{t \in \N : (NT-t, Y_t^N) \not\in \hat D^N\} \wedge NT,
\\  \bar\tau^{N*}   &=              \inf\{t >0 : (T-t, W^{(N)}_t) \not\in D^N\} \wedge T,
\\  \tau^{*}        &=              \inf\{t >0 : (T-t, W_t) \not\in D\} \wedge T,
\end{split}
\end{align}
and the functions
\begin{align*}
    G^T(x,t)      &:= U_{\mu}(x)\1_{t<T} + U_{\lambda}(x)\1_{t=T},
\\  G^T_N (x,t)   &:= U_{\mu^N}(x)\1_{t<T} + U_{\lambda^N}(x)\1_{t=T}.
\end{align*}
The rescaled results of Section 3 then read
\begin{align*}
    U_{\mu_T^N}(x) &= \E^x \left[ G^T_N\left(Y^N_{\hat\tau^{N*}}, \frac{\hat\tau^{N*}}{N} \right) \right] \tag{3.1*}
    \\                  &= \sup_{\frac{\tau}{N} \leq T} \E^x \left[G^T_N \left( Y^N_{\tau}, \frac{\tau}{N} \right)  \right]. \tag{3.2*}
\end{align*}
Or, as $\left(Y^N_{\hat\tau^{N*}}, \frac{\hat\tau^{N*}}{N} \right) = \left(W^{(N)}_{\bar\tau^{N*}}, \bar\tau^{N*} \right)$ we consider (3.1*) in $W^{(N)}$-terms
\begin{align*}
    U_{\mu_T^N}(x) &= \E^x \left[G^T_N\left(W^{(N)}_{\bar\tau^{N*}}, \bar\tau^{N*}\right) \right]. \tag{3.1**}
\end{align*}
By Lemma 5.5 and 5.6  of \cite{CoKi19} we know 
\(
\left(W^{(N)}_{\bar\tau^{N*}},\bar\tau^{N*}\right) \stackrel{P}{\rightarrow} \left(W_{\tau^{*}},\tau^{*}\right)  \text{  as } N \rightarrow \infty
\).
To see convergence of (3.1**) to (2.1) we need to show
\begin{align*}
    &\E^x \left[|G^T_N\left(W^{(N)}_{\bar\tau^{N*}},\bar\tau^{N*}\right) - G^T\left(W_{\tau^*},\tau^*\right)|\right]
\\  &\quad \leq \E^x \left[|G^T_N\left(W^{(N)}_{\bar\tau^{N*}},\bar\tau^{N*}\right) 
                    - G^T\left(W^{(N)}_{\bar\tau^{N*}},\bar\tau^{N*}\right)|\right]
                + \E^x \left[|G^T\left(W^{(N)}_{\bar\tau^{N*}},\bar\tau^{N*}\right) 
                    - G^T\left(W_{\tau^*},\tau^*\right)|\right]
    \stackrel{N \rightarrow \infty}{\longrightarrow} 0.
\end{align*}
Convergence of the first term is clear due to the fact that uniform convergence of the potential functions implies uniform convergence of $G^T_N$ to $G^T$.
Thus it remains to show convergence of the second term. 
Note that $G^T$ is usc, so it suffices to show that
\begin{equation} \label{Gconvergence}
\E^x[G^T(W_{\tau^*}, \tau^*)] \leq \liminf_{N \rightarrow \infty}\,   \E^x\left[G^T\left(W^{(N)}_{\bar\tau^{N*}},\bar\tau^{N*}\right)\right].
\end{equation}
For this, given $\e>0$ consider the auxiliary function
\[
    \tilde G^{\e}(x,t) := U_{\mu}(x)\1_{t \leq T - \e} + U_{\lambda}(x) \1_{ T - \e < t \leq T}.
\]
Then for any random variable $X$ and stopping time $\tau$ we have
\begin{equation*} 
 \E^x \left[|G^T(X,\tau) - \tilde G^{\e}(X,\tau)|\right] \leq c \cdot \P\left[ \tau \in (T-\e, T)\right].   
\end{equation*}
Combining this with the fact that $\tilde G^{\e}$ is lsc and dominating $G^T$ we get
\begin{align*}
    \E^x\left[G^T\left(W_{\tau^*}, \tau^*\right)\right] 
    &\leq \lim_{\e \searrow 0} \liminf_{N \rightarrow \infty}\,   \E^x\left[\tilde G^{\e}\left(W^{(N)}_{\bar\tau^{N*}},\bar\tau^{N*}\right)\right]
\\  &\leq \lim_{\e \searrow 0} \liminf_{N \rightarrow \infty}\, c \cdot \P\left[ \hat\tau^{N*} \in (T-\e, T)\right] +  \liminf_{N \rightarrow \infty}\, \E^x\left[G^T\left(W^{(N)}_{\bar\tau^{N*}},\bar\tau^{N*}\right)\right].
\end{align*}
Thus we are left to show that
\(
\lim_{\e \searrow 0} \liminf_{N \rightarrow \infty} \P\left[ \hat\tau^{N*} \in (T-\e, T)\right] = 0.
\)
To more easily see the arguments involving specific barrier structures, we consider the following stopping times
\begin{align*}
    \bar\rho^N  &= \inf\{t >0 : (T-t, W^{(N)}_t) \not\in D^N\} = \inf\{t >0 : (t, W^{(N)}_t) \not\in \tilde D^N\},
\\  \rho        &= \inf\{t >0 : (T-t, W_t) \not\in D\} = \inf\{t >0 : (t, W_t) \not\in \tilde D\},
\end{align*}
where $\tilde D^N$ resp. $\tilde D$ is the \emph{Rost} continuation set we obtain by reflecting $D^N$ resp. $D$ along $\left\{\frac{T}{2}\right\}\times \R$.
By \cite[Chapter 5]{CoKi19} we know that $\bar\rho^N \stackrel{P}{\rightarrow} \rho$.
Note that we have $\bar\rho^N\1_{\bar\rho^N<T} = \bar\tau^{N*}\1_{\bar\tau^{N*}<T}$.
For $0 < \tilde T \leq T$ consider
\begin{align*}
    x_-:= \sup\{y<x: (\tilde T, y) \in \tilde D\},
\\  x_+:= \inf\{y>x: (\tilde T, y) \in \tilde D\}.
\end{align*}
Since $\tilde D$ is a Rost continuation set and $\rho$ is its Brownian hitting time, we have
\[
    \P[\rho = \tilde T] = \P[W_{\tilde T} \in \{x_-,x_+\}] = 0.
\]
So, especially for any $\e > 0$ we have $\P[\rho = T - \e] = \P[\rho = T] = 0$. 
Altogether we have
\begin{align*}
    \lim_{\e \searrow 0} \liminf_{N \rightarrow \infty} \P\left[ \bar\tau^{N*} \in (T-\e, T)\right]
= \lim_{\e \searrow 0} \liminf_{N \rightarrow \infty} \P\left[ \bar\rho^{N} \in (T-\e, T)\right] 
 =  \lim_{\e \searrow 0} \P\left[ \rho \in (T-\e, T)\right] = 0,
\end{align*}
which concludes the proof of \eqref{Gconvergence}, thus the proof of convergence of (3.1**) to (2.1).
It only remains to show (2.2).
So let $\bar\tau$ be an optimiser of (2.2). 
Lemma 5.2 in \cite{CoKi19} then gives a discretisation $\tilde \sigma^N$ of $\bar\tau$ for which $Y^N_{\tilde \sigma^N}  \stackrel{a.s.}{\rightarrow} W_{\bar\tau}$ and $\frac{\tilde \sigma^N}{N} \stackrel{P}{\rightarrow} \bar\tau$.

To obtain the other inequality, for $\e \in I$ define the function
\begin{align*}
    \tilde G^{\e}_{N}(x,t)       &:= U_{\mu^N}(x)\1_{t\leq T-\e} + U_{\lambda^N}(x) \1_{T-\e<t\leq T},
\end{align*}
and by $\hat\tau^{N*}_{\e}$ resp. $\tau^{*}_{\e}$ consider the respective stopping times defined in \eqref{stptimes}, replacing $T$ by $T-\e$.
Then
\begin{align}
    &\sup_{\tau \leq T} \E^x \left[ G^T(W_{\tau}, \tau) \right]
         = \E^x \left[G^T\left(W_{\bar\tau}, \bar\tau\right) \right]               
= \lim_{\e \searrow 0} \E^x \left[\tilde G^{\e}(W_{\bar\tau},\bar\tau)\right]     \label{I+II}
\\  &\quad  \leq \lim_{\e \searrow 0} \liminf_{N \rightarrow \infty} \E^x \left[\tilde G^{\e}_N\left(Y^N_{\tilde \sigma^N}, \frac{\tilde \sigma^N}{N} \right) \right]
        \leq \lim_{\e \searrow 0} \liminf_{N \rightarrow \infty} \sup_{\frac{\tau}{N} \leq T} \E^x \left[ \tilde G^{\e}_N \left(Y^N_{\tau}, \frac{\tau}{N}\right) \right]     \label{III+IV}
\\  &\quad  \leq \lim_{\e \searrow 0} \liminf_{N \rightarrow \infty} \sup_{\frac{\tau}{N} \leq T-\e} \E^x \left[ G^{T-\e}_N \left(Y^N_{\tau}, \frac{\tau}{N}\right) \right]
        = \lim_{\e \searrow 0} \liminf_{N \rightarrow \infty} \E^x \left[ G^{T-\e}_N \left(Y^N_{\hat\tau^{N*}_{\e}}, \frac{\hat\tau^{N*}_{\e}}{N}\right) \right]      \label{V+VI}
\\  & \quad= \lim_{\e \searrow 0} \E^x \left[G^{T-\e} \left(W_{\tau^{*}_{\e}}, \tau^{*}_{\e}\right) \right]
        = \lim_{\e \searrow 0} U_{\mu_{T-\e}}(x)
        = U_{\mu_{T}}(x)
        = \E^x \left[ G^T \left(W_{\tau^*}, \tau^*\right) \right].    \label{VII+VIII+IX+X}
\end{align}
The fact that $\lim_{\e \searrow 0}  \P\left[ \bar\tau \in (T-\e, T)\right] = 0$ gives \eqref{I+II} and that $\tilde G^{\e}$ is l.s.c gives \eqref{III+IV}.
To see \eqref{V+VI} consider the function
\[
    H^{\e}_N(x,t):= U_{\mu^N}(x)\1_{t < T-\e} + U_{\lambda^N}(x) \1_{T-\e\leq t\leq T}.
\]
Then $H^{\e}_N(x,t) \geq G^{\e}_N(x,t)$ for all $(x,t) \in \R \times [0,T]$ and trivially
\[
    \sup_{\frac{\tau}{N} \leq T} \E^x \left[ G^{\e}_N\left(Y^N_{\tau}, \frac{\tau}{N}\right) \right] 
        \leq \sup_{\frac{\tau}{N} \leq T} \E^x \left[H^{\e}_N\left(Y^N_{\tau}, \frac{\tau}{N}\right) \right].
\]
Let $(Z)_{t \geq 0}$ be a martingale, then  $\left(H^{\e}_N\left(Z_{t}, t\right)\right)_{t \in [T-\e,T]} = \left(U_{\lambda^N}\left(Z_t\right)\right)_{t \in [T-\e,T]}$ is a supermartingale as $U_{\lambda^N}$ is a concave function. 
So for any stopping time $\tau$ we have
\begin{equation} \label{supmartingale}
    \E^x \left[ H^{\e}_N \left(Z_{\tau \wedge (T-\e)},\tau \wedge (T-\e) \right) \right] \geq \E^x \left[ H^{\e}_N \left(Z_{\tau \wedge T},\tau \wedge T \right) \right].
\end{equation}
We see that no optimiser of $\sup_{\frac{\tau}{N} \leq T} \E^x \left[H^{\e}_N\left(Y^N_{\tau}, \frac{\tau}{N}\right) \right]$ will stop after time $T-\e$, as this would decrease the value of the objective function.
So we have
\begin{align*}
    \sup_{\frac{\tau}{N} \leq T} \E^x \left[H^{\e}_N\left(Y^N_{\tau}, \frac{\tau}{N}\right) \right]
        = \sup_{\frac{\tau}{N} \leq T-\e} \E^x \left[H^{\e}_N\left(Y^N_{\tau}, \frac{\tau}{N}\right) \right]
        = \sup_{\frac{\tau}{N} \leq T-\e} \E^x \left[G^{T-\e}_N\left(Y^N_{\tau}, \frac{\tau}{N}\right) \right].
\end{align*}
As we know that $\hat\tau^{N*}_{\e}$ is the optimiser of this optimal stopping problem, \eqref{V+VI} follows.
Lastly, \eqref{VII+VIII+IX+X} is due to the convergence result of (3.1*) to (2.1).

To prove convergence of the Rost optimal stopping problem replace the functions $G^T$ and $G^T_N$ above by the following functions
\begin{align*}
    G^T(x,t) &= G(x)     := U_{\mu}(x) - U_{\lambda}(x),
\\  G^T_N (x,t) &= G_N(x)   := U_{\mu^N}(x) - U_{\lambda^N}(x).
\end{align*}
We can now derive our convergence results analogous to the Root case.
%
%
%
%
%
%
%
%
%
%
\section{Perspectives} \label{conclusions} 
We illustrated the elusive connection between Root and Rost's solutions to the (SEP) and optimal stopping problems. 
Specialising to the simplest possible setting, this note restricts itself to the case of SSRW and Brownian motion.  
In a recent article by Gassiat et.~al.~\cite{GaObZo19} the analytic connection between Root solutions to the (SEP) and solutions to optimal stopping problems was established for a much more general class of Markov processes.  
This suggests that our probabilistic arguments would also hold in this generalised setting.
The extension to more general martingales should follow via analogous arguments to the extension made in Chapter~\ref{Multidimensional case} by using the appropriate potential kernel, however for non-martingales some arguments need to be replaced.
%

\bibliographystyle{plain}
\bibliography{bibSwitchingIdentities}

\end{document}